\newcommand{\rn}{\mathbb R^n}
\newcommand{\sn}{\mathbb S^{n-1}}
\newcommand{\psum}{{+_{\negthinspace\kern-2pt p}}\, }
\newcommand{\qsum}[1]{{+_{\negthinspace\kern-2pt #1}}\, }
\newcommand{\dpsum}{{\tilde+_{\negthinspace\kern-1pt p}}\, }
\newcommand{\dqsum}[1]{{\tilde+_{\negthinspace\kern-1pt #1}}\, }
\newcommand{\lsub}[1]{\hskip -1.5pt\lower.5ex\hbox{$_{#1}$}}
\begin{document}
	
	\title[Existence of Solutions with small volume to $L_p$-Gaussian Minkowski problem]{Existence of Solutions with small volume to $L_p$-Gaussian Minkowski problem}

	\author[S. Tang]{Shengyu Tang}
	\address{Institute of Mathematics, 
		Hunan University,  Changsha,  410082,  China}
	\email{
	tsy@hnu.edu.cn}
	
	%\thanks will become a 1st page footnote.
	%\thanks{}
	\subjclass{52A38, 35J60}

	\keywords{Gaussian Minkowski problem, Monge-Amp\`ere equation, degree theory}
	
	\begin{abstract}
		In this paper, we derive the existence of solutions with small volume to the $L_p$-Gaussian Minkowski problem for $1\leq p<n$, which implies that there are at least two solutions for the $L_p$-Gaussian Minkowski problem. 
	\end{abstract}
	\maketitle
	
	\numberwithin{equation}{section}
	
	\newtheorem{theo}{Theorem}[section]
	\newtheorem{coro}[theo]{Corollary}
	\newtheorem{lemm}[theo]{Lemma }
	\newtheorem{prop}[theo]{Proposition}
	\newtheorem{conj}[theo]{Conjecture}
	\newtheorem{exam}[theo]{Example}
	\newtheorem{prob}[theo]{Problem}
	
	\theoremstyle{definition}
	\newtheorem*{defi}{Definition}
	
	\section{Introduction}
	
	 Brunn-Minkowski theory can be viewed as the result of merging two basic concepts  for point sets in Euclidean space: geometric invariant and vector addition. In the classical Brunn-Minkowski theory, the volume of a convex body serves as the geometric invariant, and the addition of two convex bodies is known as Minkowski addition: for two convex bodies $K,L\in \rn$, $K+L:=\{k+l: k\in K,l\in L\}$. In 1938, Aleksandrov provided the variational formula in \cite{aleksandrov1938theory},
	\begin{equation}\label{variational formula}
		\lim_{t\rightarrow0}\frac{V(K+tL)-V(K)}{t}=\int_{\sn}h_L(v)dS_K(v),
	\end{equation}
	where $V(\cdot)$ is the volume of convex body, $\sn$ is the unit sphere in $\rn$, $h_L(v):=\max_{x\in L}\{v\cdot x\}$ for $v\in \sn$ is the support function of the convex body $L$. It provides the classical non-zero finite Borel geometric measure, known as the surface area measure $S_K$. More generally, as the most basic geometric invariants, quermassintegrals include volume and surface area as special cases. When considering quermassintegrals as  functionals defined in a convex body in $\rn$, their derivatives in a certain sense are geometric measures, called area measures and (Federer's) curvature measures. For more details, see Schneider\cite{MR3155183}.
	
	Based on these geometric measures generated from convex bodies, the inverse problem, also known as the Minkowski type problem, was formulated: what conditions are necessary and sufficient for the existence and uniqueness of a convex body such that a given measure is generated by it? Minkowski \cite{MR1511220} first addressed and solved the issue of the prescribed surface area measure for the discrete case, while its general case was subsequently addressed by Aleksandrov \cite{aleksandrov1938theory} and by Fenchel-Jessen \cite{fenchel1938mengenfunktionen} in 1938.
	
	The classical Brunn-Minkowski theory has undergone two major developments, each corresponding to its two fundamental concepts. One question was posed by Firey, who initially replaced the classical Minkowski combination with the $L_p$-Minkowski combination. For two convex bodies $K$ and $L$, denoted by $K+_pL$ the $L_p$-Minkowski combination of $K$ and $L$, if its support function satisfies $h_{K+_pL}(v)=[h_K^p(v)+h_L^p(v)]^{\frac{1}{p}}$, where $h_K$, $h_L$ are the support functions for $K$ and $L$ respectively. Similar to \eqref{variational formula}, the volume variational formula for the $L_p$-Minkowski sum is as follows,
	\[
	\lim_{t\rightarrow0}\frac{V(K+_ptL)-V(K)}{t}=\frac{1}{p}\int_{\sn}h_L^p(v)dS_p(K,v),
	\]
	where $S_p(K,\cdot):=h_K^{1-p}S(K,\cdot)$ is called the $L_p$-surface area measure. Correspondingly, this is related to a Minkowski problem called the  $L_p$-Minkowski problem. It is evident that when $p=1$, $S_p(K,\cdot)=S(K,\cdot)$, and the $L_p$-Minkowski problem reduces to the classical Minkowski problem.  Lutwak solved the existence and uniqueness of convex bodies in symmetric cases for $p>1$ in \cite{MR1231704}. Recent years have seen relevant articles on the existence and uniqueness of solutions to the Minkowski problem when $p<1$. For example, Chen-Huang-Li-Liu in \cite{MR4088419} provided the uniqueness for even conditions when $p_0<p<1$, where $p_0\geq1-\frac{1}{\frac{4n^2}{\pi^2}+\frac{4n}{\pi}}$. Jian-Lu-Wang \cite{MR3366854} showed that the uniqueness can not hold when $p<0$.  When $p=0$, its related Minkowski problem is known as the lograithmic Minkowski problem. B\"{o}r\"{o}czky-LYZ  first solved the even case in \cite{MR3037788}, and B\"{o}r\"{o}czky-Heged\H{u}s-Zhu \cite{MR3509941} established the existence under discrete case. For additional works on the logarithmic Minkowski problem, please see \cite{MR3896091,MR3228445} and so on. When $p=-n$, its related problem is called the centro-affine Minkowski problem and see related research in \cite{MR2254308}.
	More extensive results for the $L_p$-Minkowski problem can be found in \cite{MR2132298,MR3872853,MR3680945,MR3565388}.
	
	The dual Brunn-Minkowski theory is another development in the field of Brunn-Minkowski theory. Lutwak first introduced the dual quermasintegral $\tilde{W}_q(K)$ in \cite{MR380631,MR415505} in 1975. Recently, an exciting variational formula has been discovered by Huang-Lutwak-Yang-Zhang \cite{MR3573332},
	\[
	\frac{d}{dt}\tilde{W}_{n-i}(K_t)|_{t=0}=i\int_{\sn}f(v)d\tilde{C}_i(K,v),  
	\]
	where $K_t=\{x\in\rn:\ x\cdot v\leq h_K(v)+tf(v)\ \text{for all}\ v\in\sn \}$, $i\in[1,n]$, and $\tilde{C}_i(K,\cdot)$ are the dual curvature measures, which are dual to Federer's curvature measures and play the same roles in their Minkowski types problems as area measures and curvature measures. The Minkowski problems corresponding to the dual curvatures measure are called the dual Minkowski problem. In \cite{MR3573332}, the authors provided the sufficient condition for the existence of o-symmetric solutions. Subsequently, numerous studies have been conducted on the dual Minkowski problem, including references \cite{MR4008522,MR3953117,MR4271790,MR4055992}. 
	
Furthermore, there are various other types of geometric invariants, such as capacity, with Minkowski type problems first explored by David Jerison \cite{MR1395668}. In addition, Colesanti-Nystr\"om-Salani-Xiao-Yang-Zhang \cite{MR3406534} investigated Minkowski-type problems for $p$-capacity when $1<p<n$. The chord integral, proposed by Lutwak-Xi-Yang-Zhang \cite{MR1111}, addresses the chord Minkowski problem and provides the necessary and sufficient conditions for the existence of solutions. Xi-Yang-Zhang-Zhao further studied the $L_p$ chord Minkowski problem in\cite{MR4533537}, solving the case for $p>1$ and the symmetric case for $0<p<1$.
	
	The primary focus of this article is on the Gaussian volume functional in Gaussian probability space, initially proposed by Huang-Xi-Zhao in \cite{MR4252759}. The Gaussian volume $\gamma_n$ of a convex set $E$ in $\rn$ is defined by 
	\begin{equation*}\label{Gaussian probability measure}
		\gamma_n(E)=\frac{1}{(\sqrt{2\pi})^n}\int_Ee^{-\frac{|x|^2}{2}}dx,
	\end{equation*} 
	which is neither translation invariant nor homogeneous.  The surface area measure $S_{\gamma_n,K}(\cdot)$ in the Gaussian probability space is called the Gaussian surface area measure and defined on Borel $\eta\subset\sn$,
	\[
	S_{\gamma_n,K}(\eta)=\frac{1}{(\sqrt{2\pi})^n}\int_{v^{-1}_K(\eta)}e^{-\frac{|x|^2}{2}}d\mathcal{H}^{n-1}(x).
	\] 
	The variational formula presented in \cite{MR4252759} essentially applies the variational formula from \cite{MR3573332}, making it straightforward to obtain the derivative of Gaussian volume $\gamma_n$ on the set of convex bodies,
	\begin{equation}\label{variational of gaussian volume}
		\lim\limits_{t\rightarrow 0}\frac{\gamma_n(K+tL)-\gamma_n(K)}{t}=\int_{\sn}h_LdS_{\gamma_n, K}
	\end{equation}
	for any convex bodies $K$ and $L$ containing the origin in their interiors.
	
	\textbf{The Gaussian Minkowski problem.} Given a finite Borel measure $\mu$, what are the necessary and sufficient conditions on $\mu$ so that there exists a convex body $K\in\mathcal{K}^n_o$ such that
	\[
	\mu=S_{\gamma_n, K}?
	\] 
	
	In \cite{MR4252759}, the authors used the degree theory to establish the existence of solutions for the Gaussian Minkowski problem. Specifically, they demonstrated that for a 
	given even measure $\mu$ on $\sn$ that is not concentrated in any subspace and $|\mu|<\frac{1}{\sqrt{2\pi}}$, there exists a unique o-symmetric convex body $K$ with $\gamma_n(K)>\frac{1}{2}$ such that	$\mu=S_{\gamma_n,K}$.
	
	If $K$ is sufficiently smooth, its Gaussian surface area measure is absolutely continuous with respect to spherical Lebesgue measure. In this case, the Gaussian Minkowski problem reduces to solving the following Monge-Amp\`ere type equation on $\sn$, 
	\begin{equation}\label{monge ampere equation}
		\frac{1}{(\sqrt{2\pi})^n}e^{-\frac{|\nabla h|^2+h^2}{2}}\det(\nabla^2h+hI)=f, 
	\end{equation}
where $\nabla h$ and $\nabla^2 h$ are the gradient and the Hessian of $h$ on $\sn$ with respect to an orthonormal basis. It is obviously of interest to study the $L_p$-Gaussian Minkowski problem.

	\textbf{$L_p$-Gaussian Minkowski problem.} Given a finite Borel measure $\mu$, what are the necessary and sufficient conditions on $\mu$ so that there exists a convex body $K\in\mathcal{K}^n_o$ such that
	\[
	\mu=S_{p,\gamma_n, K}?
	\]  
	
	 Liu \cite{MR4358235} provided the variational formula of Gaussian volume $\gamma_n$ for the $L_p$ Minkowski sum, which applies the variational formula \eqref{variational of gaussian volume},
	\[
	\lim\limits_{t\rightarrow 0}\frac{\gamma_n(K+_ptL)-\gamma_n(K)}{t}=\int_{\sn}h_L^pdS_{p,\gamma_n, K} 
	\] 
	for any convex bodies $K$ and $L$ containing the origin in their interiors, where the $L_p$-Gaussian surface area measure $S_{p,\gamma_n,K}$ is given by
	\[
	S_{p,\gamma_n,K}(\eta)=\frac{1}{(\sqrt{2\pi})^n}\int_{v^{-1}_K(\eta)}e^{-\frac{|x|^2}{2}}(x,\nu_K(x))^{1-p}d\mathcal{H}^{n-1}(x). 
	\] 
	
    The main results of \cite{MR4358235} state that
	for $p\geq1$, if $\mu$ is a nonzero finite even measure on $\sn$ that is not concentrated in any subspace and $|\mu|<\sqrt{\frac{2}{\pi}}r^{-p}ae^{-\frac{a^2}{2}}$, where $r$ and $a$ are chosen such that $\gamma_n(rB)=\gamma_n(P)=\frac{1}{2}$, symmetry strip $P=\{x\in \mathbb{R}^n: |x_1|\leq a\}$, then there exists a unique o-symmetric $K$ with $\gamma_n(K)>\frac{1}{2}$ such that
	\[
	\mu=S_{p,\gamma_n,K}. 
	\] 
	Its corresponding Monge-Amp\`ere type equation on $\sn$ is given by
	\begin{equation}\label{Lp Gaussian Minkowski Problem}
		\frac{1}{(\sqrt{2\pi})^n}h^{1-p}e^{-\frac{h^2+|\nabla h|^2}{2}}\det(\nabla^2h+hI)=f. 
	\end{equation}
	
	Recently, Feng-Hu-Xu utilized the continuous method to derive the existence of smooth solutions for equation \eqref{Lp Gaussian Minkowski Problem} without volume limitation when $p>n$ in \cite{MR4564937}. They also derived an asymmetric smooth solution for equation \eqref{Lp Gaussian Minkowski Problem} with $\gamma_n(K)>\frac{1}{2}$ when $1\leq p<n$. 
	
	\textbf{Existence of solutions with small volume.} It is evident to highlight a crucial restrictive condition, where $\gamma_n(K)>\frac{1}{2}$, for the existence and uniqueness of solutions for equation \eqref{Lp Gaussian Minkowski Problem} in both aforementioned works.  The condition is intrinsically determined by the Ehrhard inequality, which can be utilized to prove the uniqueness of solutions when Gaussian volume exceeds $\frac{1}{2}$.  Chen-Hu-Liu-Zhao \cite{MR4658824} raised the question of whether there exists a solution $K$ to the $L_p$-Gaussian Minkowski problem with $\gamma_n(K)<\frac{1}{2}$? They demonstrated in $\mathbb{R}^2$ space that such even solutions do indeed exist.  The end of the article also points out that if the uniqueness of equation \eqref{Lp Gaussian Minkowski Problem} holds for $f\equiv c_0$, then there also exist high-dimensional solutions with Gaussian volume less than $\frac{1}{2}$ for equation \eqref{Lp Gaussian Minkowski Problem}. Fortunately, Ivaki-Milman provided this uniqueness in Corollary 1.4 in \cite{ivaki2023uniqueness}, and the existence of high-dimensional even solutions with small volume can also be obtained.
	
	In this article, we mainly study the case when $1\leq p<n$ with Gaussian volume less than $\frac{1}{2}$. Applying the uniqueness established in Theorem 1.3 of  \cite{ivaki2023uniqueness} and degree theory, we can extend the analysis to the $L_p$-Gaussian Minkowski problem in the small Gaussian volume setting and derive an even smooth solution to the Monge-Amp\`ere equation \eqref{Lp Gaussian Minkowski Problem}. 
	
	\begin{theo}\label{Existence of smooth,  small solution}
		Let $\alpha\in(0, 1)$, $1\leq p<n$, and $f\in C^{2, \alpha}_+(S^{n-1})$ be a positive even function and satisfy $\|f\|_{L_1}<(\frac{1}{\sqrt{2\pi}})^p(\frac{n}{2})^{1-p}$. Then there exists a $C^{4, \alpha}$ o-symmetric convex body $K$ with $\gamma_n(K)<\frac{1}{2}$ , its support function $h$ such that
		\begin{equation*}
			\frac{1}{(\sqrt{2\pi})^n}e^{-\frac{|\nabla h|^2+h^2}{2}}h^{1-p}\det(h_{ij}+hI)=f. 
		\end{equation*}
	\end{theo}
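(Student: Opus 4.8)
The plan is to recast the equation as the search for a zero of the fully nonlinear operator
\[
G(h):=\frac{1}{(\sqrt{2\pi})^n}e^{-\frac{|\nabla h|^2+h^2}{2}}h^{1-p}\det(\nabla^2 h+hI)
\]
on the cone of positive even functions $h\in C^{2,\alpha}(\sn)$, and to solve $G(h)=f$ by a degree-theoretic argument of the type used by Huang--Xi--Zhao for the Gaussian Minkowski problem. First I would select a constant $c_0>0$ whose total mass $c_0|\sn|$ lies strictly below the threshold $(\tfrac{1}{\sqrt{2\pi}})^p(\tfrac{n}{2})^{1-p}$, and form the linear homotopy $f_t:=(1-t)c_0+tf$, $t\in[0,1]$; since $\|\cdot\|_{L_1}$ is affine along the homotopy, $\|f_t\|_{L_1}$ stays under the same threshold for every $t$, and $f_t>0$ throughout. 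The degree of $G-f_t$ on a suitable open set would then be shown to be independent of $t$, computed explicitly at $t=0$, and transferred to $t=1$.

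The analytic core is a family of a priori estimates, uniform in $t$, for even solutions $h$ of $G(h)=f_t$. I would first establish $C^0$ bounds: a positive lower bound $h\ge 1/C$ preventing the convex body from collapsing, and an upper bound. The threshold on $\|f_t\|_{L_1}$ is calibrated precisely so that, combined with the evenness of $h$ and the hypothesis $p\ge1$, the estimates (together with a continuity argument along the homotopy issuing from the small ball) confine $\max_{\sn}h$ strictly below $\sqrt{n-p}$, the unique maximizer of $\phi(r):=\frac{1}{(\sqrt{2\pi})^n}e^{-r^2/2}r^{n-p}$, which is the value of $G$ on the ball $rB$. Because $p\ge1$ forces $n-p$ below the median of the $\chi^2_n$ law, one has $\gamma_n(\sqrt{n-p}\,B)<\tfrac12$, so the inclusion $K_h\subseteq\sqrt{n-p}\,B$ yields $\gamma_n(K_h)<\tfrac12$ while simultaneously keeping $h$ off the degenerate radius $\sqrt{n-p}$. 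With the $C^0$ bounds in hand, the gradient bound is immediate from convexity, the second-order bound follows from a Pogorelov/Evans--Krylov argument exploiting the Monge--Amp\`ere structure, and $C^{2,\alpha}$ regularity then follows by Evans--Krylov, with all constants uniform in $t$.

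Next I would fix an open bounded set $\mathcal{O}\subset C^{2,\alpha}_e(\sn)$ cut out by the strict a priori bounds, in particular $\max_{\sn}h<\sqrt{n-p}$ and a positive lower bound on $h$, so that the estimates guarantee $G(h)\ne f_t$ on $\partial\mathcal{O}$ for every $t$; each $h\in\mathcal{O}$ then automatically satisfies $\gamma_n(K_h)<\tfrac12$. Homotopy invariance gives $\deg(G-f,\mathcal{O},0)=\deg(G-c_0,\mathcal{O},0)$. For the constant datum $c_0$, Theorem 1.3 of \cite{ivaki2023uniqueness} forces every even solution in $\mathcal{O}$ to be a centered ball, and the constraint $\max h<\sqrt{n-p}$ selects the unique small root $r_1$ of $\phi(r)=c_0$. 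Linearizing $G$ at $h\equiv r_1$ gives, up to the positive factor $G(r_1)$, the operator $\tfrac{1}{r_1}\Delta_{\sn}\varphi+(\tfrac{n-p}{r_1}-r_1)\varphi$, whose kernel on even functions is nontrivial exactly when $(n-p)-r_1^2=k(k+n-2)$ for some even $k\ge0$; since the right-hand side vanishes at $k=0$ and is at least $2n$ for $k\ge2$, while $0<(n-p)-r_1^2<n$ for $0<r_1<\sqrt{n-p}$, the linearization is invertible and the local degree equals $\pm1$. Hence $\deg(G-f,\mathcal{O},0)=\pm1\ne0$, which produces an even solution of $G(h)=f$ with $\gamma_n(K_h)<\tfrac12$; a Schauder bootstrap using $f\in C^{2,\alpha}_+$ then upgrades it to $C^{4,\alpha}$.

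The main obstacle is the pair of $C^0$ estimates: the lower bound ruling out degeneration and, above all, the confinement $\max h<\sqrt{n-p}$ that pins the solution to the small branch of $\phi$. The non-homogeneity of the Gaussian weight couples the mass $\|f_t\|_{L_1}$ to the geometry of $K_h$ in a way that does not reduce to the classical $L_p$ estimates, and the two-branch structure of $\phi$ means one must quantitatively exclude the large-volume branch, which is exactly the regime $\gamma_n>\tfrac12$ handled by the earlier works. I expect this confinement, and the verification that it is strict enough to keep all solutions off $\partial\mathcal{O}$, to absorb most of the technical effort; once it is in place, the degree computation and the regularity bootstrap are routine, and the uniqueness input from \cite{ivaki2023uniqueness} supplies the remaining ingredient for the constant problem.
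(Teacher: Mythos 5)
Your overall architecture (homotopy $f_t=(1-t)c_0+tf$, degree theory, Ivaki--Milman uniqueness for the constant datum, linearization $\Delta_{\sn}\varphi+((n-p)-r_0^2)\varphi$ at the small ball) matches the paper's proof. But there is a genuine gap in the step you yourself identify as the analytic core: the confinement $\max_{\sn}h<\sqrt{n-p}$. The paper does not prove, and the hypotheses do not yield, any such bound. The maximum-principle $C^0$ estimate only gives $\phi(h_{\max})\geq\inf f$ with $\phi(r)=\tfrac{1}{(\sqrt{2\pi})^n}r^{n-p}e^{-r^2/2}$, which localizes $h_{\max}$ to an interval $[r_-,r_+]$ straddling the critical radius $\sqrt{n-p}$ on both sides; the $L^1$ hypothesis on $f$ is a bound on the total mass $|S_{p,\gamma_n,K}|$, and nothing in it prevents a solution from having $\max h=\sqrt{n-p}$ (indeed the constant solutions for near-critical data sit arbitrarily close to that radius on either side). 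Since a solution with $\max h=\sqrt{n-p}$ still has $\gamma_n(K)<\tfrac12$, no isoperimetric-type argument excludes it either. Consequently you cannot rule out solutions of $G(h)=f_t$ on the boundary piece $\{\max h=\sqrt{n-p}\}$ of your set $\mathcal O$, and the degree on $\mathcal O$ is not homotopy invariant. The "continuity argument along the homotopy issuing from the small ball" is not a substitute: degree theory requires that \emph{all} solutions in $\overline{\mathcal O}$ avoid $\partial\mathcal O$ for every $t$, not that one branch does.

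The paper's mechanism is different and is the one actually calibrated to the threshold $(\tfrac{1}{\sqrt{2\pi}})^p(\tfrac{n}{2})^{1-p}$: the open set is cut out by the a priori bounds together with the constraint $\gamma_n([h])<\tfrac12$ itself, and the boundary case $\gamma_n([h])=\tfrac12$ is excluded by the $L_p$-Gaussian isoperimetric inequality of Feng--Hu--Xu, which forces $|S_{p,\gamma_n,[h]}|\geq(\tfrac{1}{\sqrt{2\pi}})^p(\tfrac{n}{2})^{1-p}$ for any body of Gaussian measure exactly $\tfrac12$, contradicting $\|f_t\|_{L^1}$ being below that value. Your proposal never invokes this inequality, which is the key lemma; replacing the constraint $\gamma_n<\tfrac12$ by the strictly stronger containment $K\subseteq\sqrt{n-p}\,B$ is what breaks the boundary exclusion. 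The remaining ingredients of your outline (the spectral computation showing the linearization at the small root is invertible, the affine behavior of $\|f_t\|_{L^1}$, the Evans--Krylov bootstrap) are fine and consistent with the paper.
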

	
	Through an approximation argument and combining the conclusion in \cite{MR4564937} with the existence of a solution with Gaussian volume exceeding $\frac{1}{2}$, there exist at least two solutions to the $L_p$-Gaussian Minkowski problem when $1\leq p<n$.
	\begin{theo}\label{approximation}
		For $1\leq p<n$, if $\mu$ is an even Borel measure and absolutely continuous with respect to the spherical Lebesgue measure $\nu$, i.e., $d\mu=fd\nu$, and $\frac{1}{C}<f<C$ for some positive constant $C$ and $\|f\|_{L^1}<(\frac{1}{\sqrt{2\pi}})^p(\frac{n}{2})^{1-p}$, then there exist at least two o-symmetric convex bodies $K_1$, $K_2$ such that
		\[
		S_{p, \gamma_n, K_1}=S_{p, \gamma_n, K_2}=\mu. 
		\]
	\end{theo}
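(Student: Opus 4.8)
My plan is to realize the two required bodies as Hausdorff limits of smooth solutions: one limit coming from the small-volume branch of Theorem~\ref{Existence of smooth,  small solution}, the other from the large-volume solution of Feng--Hu--Xu \cite{MR4564937}, with an approximation step removing the smoothness of the density. Since $d\mu=f\,d\nu$ with merely $\tfrac1C<f<C$, I would first mollify $f$ on $\sn$ by convolution with a smooth even nonnegative kernel, producing even densities $f_j\in C^{2,\alpha}_+(\sn)$ with $f_j\to f$ in $L^1(\sn)$ and almost everywhere, while preserving a two-sided bound $\tfrac{1}{2C}<f_j<2C$. Because $\|f_j\|_{L^1}\to\|f\|_{L^1}$, the strict constraint $\|f_j\|_{L^1}<(\tfrac{1}{\sqrt{2\pi}})^p(\tfrac n2)^{1-p}$ then holds for all large $j$.

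For each such $j$, Theorem~\ref{Existence of smooth,  small solution} supplies an o-symmetric $K_j^s$ with $\gamma_n(K_j^s)<\tfrac12$ solving \eqref{Lp Gaussian Minkowski Problem} with datum $f_j$, while the construction of \cite{MR4564937} supplies a solution $K_j^\ell$ with $\gamma_n(K_j^\ell)>\tfrac12$; this large-volume solution is o-symmetric because the data are even and the equation is invariant under $x\mapsto -x$, so that the uniqueness for $\gamma_n>\tfrac12$ from \cite{ivaki2023uniqueness} (or directly \cite{MR4358235}) forces $K_j^\ell=-K_j^\ell$. The next task is uniform geometric control: I would prove, uniformly in $j$ and for both families, a positive lower bound $\min_{\sn}h_j\ge c_0>0$ and an upper bound $\max_{\sn}h_j\le R_0$ on the support functions. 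The lower bound on $f_j$ keeps the inradius from collapsing, while the Gaussian factor $e^{-(h^2+|\nabla h|^2)/2}$ together with $\gamma_n\le 1$ keeps the circumradius finite; the uniform lower bound is indispensable because the weight $h^{1-p}$ degenerates as $h\to0$ when $p>1$. Granting these bounds, Blaschke selection extracts subsequential limits $K_1=\lim K_j^s$ and $K_2=\lim K_j^\ell$, both o-symmetric convex bodies with the origin in the interior, and the weak continuity of $S_{p,\gamma_n,\cdot}$ under Hausdorff convergence of nondegenerate bodies, combined with $f_j\,d\nu\rightharpoonup f\,d\nu=\mu$, yields $S_{p,\gamma_n,K_1}=S_{p,\gamma_n,K_2}=\mu$.

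The crux is to show $K_1\neq K_2$, since a priori both branches might converge to one and the same body of Gaussian volume exactly $\tfrac12$. I would rule this out by showing that the strict hypothesis $\|f\|_{L^1}<(\tfrac{1}{\sqrt{2\pi}})^p(\tfrac n2)^{1-p}$ forbids any solution with $\gamma_n(K)=\tfrac12$. When $p=1$ this is precisely the Gaussian isoperimetric inequality: here $|\mu|=S_{\gamma_n,K}(\sn)$ is the Gaussian perimeter of $K$, which for $\gamma_n(K)=\tfrac12$ is at least the perimeter $\tfrac{1}{\sqrt{2\pi}}$ of the half-space of volume $\tfrac12$, contradicting $\|f\|_{L^1}<\tfrac{1}{\sqrt{2\pi}}$. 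For general $1\le p<n$ the same role is played by the corresponding weighted lower bound for $|\mu|=\int_{\sn}h^{1-p}\,dS_{\gamma_n,K}$ at $\gamma_n(K)=\tfrac12$, for which the constant $(\tfrac{1}{\sqrt{2\pi}})^p(\tfrac n2)^{1-p}$ should be the sharp threshold; establishing this weighted isoperimetric-type estimate is the main obstacle of the proof. Once the threshold $\tfrac12$ is excluded for solutions, continuity of $\gamma_n$ along the two sequences gives $\gamma_n(K_1)\le\tfrac12$ and $\gamma_n(K_2)\ge\tfrac12$, and since $K_1,K_2$ are themselves solutions the value $\tfrac12$ is impossible, forcing $\gamma_n(K_1)<\tfrac12<\gamma_n(K_2)$. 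Hence $K_1\neq K_2$, which produces the two distinct o-symmetric solutions and completes the proof.
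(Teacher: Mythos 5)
Your proposal follows essentially the same route as the paper: mollify $f$, take the small-volume solutions from Theorem \ref{Existence of smooth,  small solution b} and the large-volume even solutions from Feng--Hu--Xu, pass to Hausdorff limits via the uniform $C^0$ estimate and Blaschke selection, use weak continuity of $S_{p,\gamma_n,\cdot}$, and exclude $\gamma_n(K)=\tfrac12$ by the $L_p$-Gaussian isoperimetric inequality. The ``weighted isoperimetric-type estimate'' you flag as the main obstacle is not an obstacle at all: it is exactly Lemma \ref{Lp Gaussian isoperimetric inequality}, already available from \cite{MR4564937}, so your argument closes.
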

	
	\section{Preliminaries}
	Our notations and some properties are based on Schneider\cite{MR3155183}.
	
	 Let $\rn$ be the $n$-dimensional Euclidean space.  The unit sphere in $\rn$
	is denoted by $\sn$. A convex body in $\rn$ is a compact convex set with nonempty interior. 
	Let $K$ be a convex body in $\rn$ that contains the origin in its interior. 
	The radial function $\rho_K$ is defined by
	\[
	\rho_K(x) = \max\{\lambda : \lambda x \in K\},  \quad x\in \rn\setminus \{0\}. 
	\]
	For $u\in \sn$, it is clear that $\rho_K(u)u \in \partial K$.  The support function
	$h_K$ of $K$ is defined by
	\[
	h_K(x) = \max\{x\cdot y : y\in K\},  \quad x\in\rn. 
	\]
	
	The radial function and the support function are related by the equations, 
	\begin{align*}
		h_K(v) &= \sup_{u\in\sn} \{\rho_K(u) u\cdot v\},  \\
		\frac1{\rho_K(u)} &= \sup_{v\in\sn} \frac{u\cdot v}{h_K(v)}. 
	\end{align*}
	
	Denote by $\mathcal{K}^n$ the class of compact convex sets in $\rn$, and denote by $\mathcal K^n_o$ the class of compact convex sets that contain the origin in their interior in $\rn$.  Also denote by $\mathcal K^n_e$ the class of o-symmetric convex bodies in $\rn$. The Hausdorff metric of the sets $K,L\in\mathcal{K}^n$ is defined by 
	\[
	d(K,L)=\min\{t\geq0:K\subset L+tB, L\subset K+tB\}, 
	\]
and it is equivalent to 
	\[
	d(K,L)= \max_{u\in \sn}|h(K,u)-h(L,u)|, 
	\]
	see subsection 1.8 in Schneider\cite{MR3155183}. We say a sequence $\{K_i\}$ of compact convex sets is  convergent to $K$ in Hausdorff metric, if it satisfies
	\[
	d(K_i,K)\rightarrow0
	\] 
	as $i\rightarrow\infty$. 
	
	For $x\notin K$,  denote by $d(K, x)$ the distance from $x$ to $K$.  There exists
	a unique point $p_K(x) \in \partial K$ so that
	\[
	d(K, x) = |x-p_K(x)|. 
	\]
	Denote by $v_K(x)$ the outer normal unit vector of $\partial K$ at $p_K(x)$
	defined by
	\[
	v_K(x) = \frac{x-p_K(x)}{d(K, x)},
	\]
	and
	\[
	\alpha_K(x)=v_K(\rho_K(x)x). 
	\]
		
	\textbf{$L_p$-Gaussian surface area measure.}
	For $p\in \mathbb{R}$, the $L_p$-Gaussian surface area measure $S_{p,\gamma_n,K}(\cdot)$ on the unit sphere $\sn$ is defined as 
	\[
	S_{p,\gamma_n,K}(\eta)=\frac{1}{(\sqrt{2\pi})^n}\int_{v^{-1}_K(\eta)}e^{-\frac{|x|^2}{2}}(x,\nu_K(x))^{1-p}d\mathcal{H}^{n-1}(x),
	\]
	where $\eta\subset \sn$ is a Borel set. 
	
	The $L_p$-Gaussian surface area measure converges weakly with respect to Hausdorff metric. 
	
	\begin{theo}\label{weak convergent}
		Let $K_i\in \mathcal{K}^n_o$ such that $K_i$ converges to $K_0\in\mathcal{K}^n_o$ in Hausdorff metric.  Then $S_{p,\gamma_n,K_i}$ converges to $S_{p,\gamma_n,K_0}$ weakly. 
	\end{theo}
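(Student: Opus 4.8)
The plan is to verify weak convergence directly from its definition: show that $\int_{\sn} g\,dS_{p,\gamma_n,K_i}\to\int_{\sn} g\,dS_{p,\gamma_n,K_0}$ for every $g\in C(\sn)$. The first step is to transport the integral from the varying boundary $\partial K$ onto the fixed domain $\sn$ via the radial parametrization $u\mapsto\rho_K(u)u$. Using that a boundary point $x=\rho_K(u)u$ has outer normal $\alpha_K(u)$, that $(x,\nu_K(x))=h_K(\alpha_K(u))$, and the standard area relation $d\mathcal{H}^{n-1}(x)=\tfrac{\rho_K(u)^n}{h_K(\alpha_K(u))}\,du$, I would rewrite the measure as
\begin{equation*}
\int_{\sn} g\,dS_{p,\gamma_n,K}=\frac{1}{(\sqrt{2\pi})^n}\int_{\sn}g(\alpha_K(u))\,e^{-\frac{\rho_K(u)^2}{2}}\,h_K(\alpha_K(u))^{-p}\,\rho_K(u)^n\,du.
\end{equation*}
This confines all the $K$-dependence to the three quantities $\rho_K$, $h_K$, and the radial Gauss map $\alpha_K$, each evaluated on the fixed sphere.

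Next I would extract from the hypothesis $K_i\to K_0$ the needed convergence properties of these three quantities. Hausdorff convergence is equivalent to the uniform convergence $h_{K_i}\to h_{K_0}$ on $\sn$, and for bodies containing the origin in their interior it also yields uniform convergence $\rho_{K_i}\to\rho_{K_0}$. Moreover a convergent sequence is eventually trapped between two balls, so there exist $0<r<R$ with $rB\subset K_i\subset RB$ for all large $i$ and for $K_0$; hence $\rho_{K_i},h_{K_i}\in[r,R]$ uniformly. The remaining ingredient is the almost-everywhere pointwise convergence $\alpha_{K_i}(u)\to\alpha_{K_0}(u)$: at every $u$ for which $\rho_{K_0}(u)u$ is a smooth point of $\partial K_0$ — all but a set of $u$ of spherical measure zero — the outer unit normals of the nearby bodies converge to that of $K_0$.

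Given these facts I would finish by dominated convergence. On the full-measure set where $\alpha_{K_i}(u)\to\alpha_{K_0}(u)$, continuity of $g$, the uniform convergence of $\rho_{K_i}$ and $h_{K_i}$, and continuity of $t\mapsto t^{-p}$ on $[r,R]$ show that the integrand converges pointwise to the integrand for $K_0$. The uniform bounds $\rho_{K_i},h_{K_i}\in[r,R]$, together with $e^{-\rho^2/2}\le 1$ and $\|g\|_\infty<\infty$, furnish a constant dominating function on the finite-measure space $\sn$; passing to the limit yields the claimed weak convergence. Note the bound $t\mapsto t^{-p}$ is harmless for every real $p$ precisely because $[r,R]$ is bounded away from $0$ and $\infty$.

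I expect the main obstacle to be justifying the almost-everywhere convergence $\alpha_{K_i}\to\alpha_{K_0}$ of the radial Gauss maps. Unlike $\rho_K$ and $h_K$, the normal map is only defined up to a null set and is genuinely discontinuous at non-smooth boundary points, so this step rests on the (standard) fact that Hausdorff convergence forces the outer unit normals to converge at points where the limit body is smooth, together with the fact that such points carry full measure on $\sn$. Once this is in hand, the change-of-variables Jacobian and the domination estimate are routine.
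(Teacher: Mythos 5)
Your proposal is correct and follows essentially the same route as the paper: transport the integral to $\sn$ via the radial parametrization (the paper's integrand $\frac{g(\alpha_{K_i}(u))}{u\cdot\alpha_{K_i}(u)}h_{K_i}(\alpha_{K_i}(u))^{1-p}e^{-\rho_{K_i}^2/2}\rho_{K_i}^{n-1}$ equals your $g(\alpha_{K_i}(u))h_{K_i}(\alpha_{K_i}(u))^{-p}e^{-\rho_{K_i}^2/2}\rho_{K_i}^{n}$ since $h_K(\alpha_K(u))=\rho_K(u)\,u\cdot\alpha_K(u)$), then use the uniform ball bounds, the a.e.\ convergence of the radial Gauss maps, and pass to the limit. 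Your write-up is in fact slightly more explicit than the paper's about the dominating function and about why $\alpha_{K_i}\to\alpha_{K_0}$ a.e., which the paper simply asserts.
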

	\begin{proof}
		Since $K_0\in\mathcal{K}^n_o$, there exists a uniform constant $C>0$ such that 
		\[
		\frac{1}{C}B\subset K_i\subset CB
		\] 
		for sufficiently large $i$.  Then we have
		\begin{equation}\label{3. 1}
			h_{K_i}^{1-p}e^{-\frac{\rho_{K_i}(u)^2}{2}}\rho_{K_i}^{n-1}(u)\rightarrow
			h_{K_0}^{1-p}e^{-\frac{\rho_{K_0}(u)^2}{2}}\rho_{K_0}^{n-1}(u)
		\end{equation}
		as $i\rightarrow \infty$. On the other hand, we consider a function $g\in C(\sn)$.  Since $\alpha_{K_i}\rightarrow\alpha_{K_0}$ a.e. with respect to spherical Lebesgue measure,
		\begin{equation}\label{3. 2}
			\frac{g(\alpha_{K_i}(u))}{u\cdot\alpha_{K_i}(u)}\rightarrow
			\frac{g(\alpha_{K_0}(u))}{u\cdot\alpha_{K_0}(u)}
		\end{equation}
		as $i\rightarrow \infty$.  Combining \eqref{3. 1} and \eqref{3. 2}, we have
		\begin{align*}
			\int_{\sn}gdS_{p,\gamma_n,K_i}&=\frac{1}{(\sqrt{2\pi})^n}\int_{\partial K_i}g(\nu_{K_i}(x))(x\cdot\nu_{K_i}(x))^{1-p}e^{-\frac{|x|^2}{2}}d\mathcal{H}^{n-1}(x)\\
			&=\frac{1}{(\sqrt{2\pi})^n}\int_{\sn}g(\alpha_i(u))h_{K_i}(\alpha_{K_i}(u))^{1-p}e^{-\frac{\rho_{K_i}(u)^2}{2}}\frac{\rho_{K_i}(u)^n}{h_{K_i}}(\alpha_{K_i}(u))du\\
			&=\frac{1}{(\sqrt{2\pi})^n}\int_{\sn}\frac{g(\alpha_i(u))}{u\cdot\alpha_{K_i}(u)}h_{K_i}(\alpha_{K_i}(u))^{1-p}e^{-\frac{\rho_{K_i}(u)^2}{2}}\rho_{K_i}(u)^{n-1}du\\
			&\rightarrow\frac{1}{(\sqrt{2\pi})^n}\int_{\sn}\frac{g(\alpha_0(u))}{u\cdot\alpha_{K_0}(u)}h_{K_0}(\alpha_{K_0}(u))^{1-p}e^{-\frac{\rho_{K_0}(u)^2}{2}}\rho_{K_0}(u)^{n-1}du\\
			&=\int_{\sn}gdS_{p,\gamma_n,K_0}. 
		\end{align*}
	\end{proof}

	\section{a priori estimate}
This section provides some a priori estimates. We will then utilize these estimates in degree theory to establish the existence of smooth solutions to the $L_p$-Gaussian Minkowski problem for $1\leq p<n$. 
	
	The following $C^0$ estimate is motivated by \cite{MR4658824}.
	
	\begin{lemm}\label{C0 estimate}\textbf{($C^0$ estimate)}
		Suppose $0<p<n$,  f is an even positive function on $\sn$ and $h\in C^2_+(\sn)$ is an even solution to equation \eqref{Lp Gaussian Minkowski Problem}. If there exists $\tau>0$ such that
		\begin{equation}
			\frac{1}{\tau}<f<\tau, 
		\end{equation}
		then there exists $\tau'>0$ (which depends only on $\tau$) such that
		\begin{equation}
			\frac{1}{\tau '}<h<\tau '. 
		\end{equation}
		\begin{proof}
			Firstly, we show that $h$ is bounded from above. Assume $h$ achieves its maximum at $v_0\in \sn$, i.e., $h(v_0)=h_{max}$. Then by $\eqref{Lp Gaussian Minkowski Problem}$, we have
			\[
			\frac{1}{(\sqrt{2\pi})^n}h_{max}^{1-p}e^{-\frac{h_{max}^2}{2}}h_{max}^{n-1}\geq f(v_0)>\frac{1}{\tau}. 
			\]
			Note that $\frac{1}{(\sqrt{2\pi})^n}t^{n-p}e^{-\frac{t^2}{2}}\rightarrow 0$ as $t\rightarrow\infty$, then there exists a $\tau_1>0$ such that
			\[
			h_{max}<\tau_1. 
			\]
			Next, we show that $h_{max}$ is bounded from below. By $\eqref{Lp Gaussian Minkowski Problem}$, we conclude
			\[
			h_{max}^{n-1}\geq \det(h_{ij}+h\delta_{ij})|_{v_0}=f(v_0)(\sqrt{2\pi})^nh^{p-1}(v_0)e^{\frac{h^2(v_0)}{2}}\geq C(\inf f)h_{max}^{p-1}, 
			\]
			which implies that there exists a constant $\tau_2$ such that
			\begin{equation}\label{hmax bound}
				h_{max}\geq \tau_2. 
			\end{equation}
			Finally, we show that $h$ is also bounded from below. Assume that $h$ achieves its minimum at $u_0\in S^{ n-1}$ and $h(u_0)=h_{min}$. Then by $\eqref{Lp Gaussian Minkowski Problem}$, we have
			\[
			\frac{1}{n}hdet(h_{ij}+h\delta_{ij})=\frac{(\sqrt{2\pi})^n}{n}fh^pe^{\frac{h^2+|\nabla h|^2}{2}}\geq Ch^p. 
			\]
			Observe that the total integral of $\frac{1}{n}hdet(h_{ij}+h\delta_{ij})$ on $\sn$ is the volume of $K$. Thus
			\begin{equation}\label{total integral}
				\mathcal{H}^n(K)\geq C\int_{\sn}h^p(v)dv. 
			\end{equation}
			Since $h$ is an even solution, we derive
			\begin{equation}\label{estimate of h by hmax}
				h(v)\geq h_{\max}|v\cdot v_0|. 
			\end{equation}
			Combining $\eqref{hmax bound}$, $\eqref{total integral}$ and $\eqref{estimate of h by hmax}$, we have
			\[
			\mathcal{H}^n(K)\geq Ch_{\max}^p\int_{\sn}|v\cdot v_0|^pdv=C'. 
			\]
			On the other hand, since
			\[
			K\subset (h_{\max}B_1)\cap\{x\in \mathbb{R}^{n}:|x\cdot u_0|\leq h_{\min}\}, 
			\]
			thus
			\[
			\mathcal{H}^n(K)\leq 2^nh_{\max}^{n-1}h_{\min}<C(\tau_1)h_{\min}. 
			\]
			As a consequence, there exists a $\tau_3$ such that:
			\[
			h_{\min}>\tau_3>0. 
			\]
		\end{proof}
	\end{lemm}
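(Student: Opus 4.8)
The plan is to control $h$ at its extremal points, obtaining the upper bound and a lower bound for $h_{\max}$ directly from the Monge--Amp\`ere equation \eqref{Lp Gaussian Minkowski Problem}, and then to bootstrap from $h_{\max}$ to a lower bound for $h_{\min}$ through a two-sided volume comparison that exploits the evenness of $K$. Throughout, all constants will be tracked so that they depend only on $\tau$ (and on the fixed data $n,p$).

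First I would bound $h$ from above. Let $v_0$ be a maximum point of $h$, so that $\nabla h(v_0)=0$ and the Hessian $(h_{ij})(v_0)$ is negative semidefinite; hence the eigenvalues of $h_{ij}+h\delta_{ij}$ at $v_0$ lie in $[0,h_{\max}]$ and $\det(h_{ij}+h\delta_{ij})|_{v_0}\le h_{\max}^{n-1}$. Substituting into \eqref{Lp Gaussian Minkowski Problem} and using $f>\frac1\tau$ yields
\[
\frac{1}{(\sqrt{2\pi})^n}h_{\max}^{\,n-p}e^{-\frac{h_{\max}^2}{2}}\ge f(v_0)>\frac1\tau.
\]
Since $n-p>0$, the left-hand side tends to $0$ as $h_{\max}\to\infty$, so $h_{\max}$ stays below some $\tau_1=\tau_1(\tau)$. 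For a lower bound on $h_{\max}$ I would read the same identity in reverse: at $v_0$ the equation gives $\det(h_{ij}+h\delta_{ij})|_{v_0}=(\sqrt{2\pi})^n f(v_0)h_{\max}^{\,p-1}e^{h_{\max}^2/2}$, and combining $\det\le h_{\max}^{n-1}$ with $e^{h_{\max}^2/2}\ge1$ and $f>\frac1\tau$ produces $h_{\max}^{\,n-p}\ge C/\tau$, hence $h_{\max}\ge\tau_2>0$.

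The hard part will be the lower bound for $h_{\min}$, and here I would argue geometrically. Integrating $\tfrac1n h\det(h_{ij}+h\delta_{ij})$ over $\sn$ recovers $\mathcal H^n(K)$, while \eqref{Lp Gaussian Minkowski Problem} together with $f>\frac1\tau$ and $e^{(h^2+|\nabla h|^2)/2}\ge1$ gives the pointwise bound $\tfrac1n h\det(h_{ij}+h\delta_{ij})\ge C h^p$, so that $\mathcal H^n(K)\ge C\int_{\sn}h^p\,dv$. The key observation, which uses evenness, is that $h_{\max}=\max_{y\in K}|y|$ is attained at the farthest point $h_{\max}v_0\in K$; by symmetry the whole segment $[-h_{\max}v_0,\,h_{\max}v_0]$ lies in $K$, whence $h(v)\ge h_{\max}|v\cdot v_0|$. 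This yields
\[
\mathcal H^n(K)\ge C h_{\max}^p\int_{\sn}|v\cdot v_0|^p\,dv\ge C'>0,
\]
using $h_{\max}\ge\tau_2$ and the positivity of the dimensional constant $\int_{\sn}|v\cdot v_0|^p\,dv$. On the other hand, $K$ is trapped in the slab $\{|x\cdot u_0|\le h_{\min}\}$ (where $u_0$ realizes $h_{\min}$, using evenness once more) and in the ball $h_{\max}B$, so $\mathcal H^n(K)\le 2^n h_{\max}^{n-1}h_{\min}\le C(\tau_1)h_{\min}$. Comparing the two volume estimates forces $h_{\min}\ge\tau_3>0$, and setting $\tau'=\max\{\tau_1,\tau_3^{-1}\}$ completes the proof.

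I expect the only genuine obstacle to be the lower bound on $h_{\min}$: the upper bound and the lower bound on $h_{\max}$ follow from reading the equation at the maximum point, whereas controlling $h_{\min}$ requires pairing a volume lower bound coming from the Gaussian Monge--Amp\`ere measure against a volume upper bound coming from containment in the intersection of a ball and a slab. The evenness hypothesis is essential at two points --- identifying $h_{\max}v_0$ as a point of $K$, and placing $K$ inside a symmetric slab of half-width $h_{\min}$ --- and it is what turns the bound on $h_{\max}$ into a positive lower bound on $h_{\min}$.
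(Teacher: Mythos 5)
Your proposal is correct and follows essentially the same route as the paper: reading the equation at the maximum point for the upper bound and the lower bound on $h_{\max}$, then comparing the volume lower bound $\mathcal H^n(K)\ge C\int_{\sn}h^p$ (via $h(v)\ge h_{\max}|v\cdot v_0|$ from evenness) against the upper bound $\mathcal H^n(K)\le 2^n h_{\max}^{n-1}h_{\min}$ from the ball-slab containment. The only difference is that you spell out the justification of $h_{\max}v_0\in K$ and the segment argument, which the paper leaves implicit.
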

	
	The following higher order estimations utilize the method in \cite{MR4252759}.
	
	\begin{lemm}\label{a-priori estimate} For $0<\alpha<1$ and $0<p<n$,  assume that the even function $f\in C^{2, \alpha}(\sn)$ such that $\frac{1}{C}<f<C$ for some positive constants $C$. For $K\in\mathcal{K}^n_e$, if its support function $h\in C^{4, \alpha}(\sn)$ satisfies
		\begin{equation}\label{lp monge ampere equation}
			\frac{1}{(\sqrt{2\pi})^n}h^{1-p}e^{-\frac{h^2+|\nabla h|^2}{2}}\det(h_{ij}+h\delta_{ij})=f, 
		\end{equation} 
		then there exists a positive $C'$ (which only depends on $C$) such that\\
		(1) $C^1$ estimate: $\frac{1}{C'}<\sqrt{|\nabla h|^2+h^2}<C'.$\\
		(2) $C^2$ estimate: $\frac{1}{C'}I<(\nabla^2h+hI)<C'I$.\\
		(3) $|h|_{C^{4, \alpha}}<C'$.

		\begin{proof}
			(1) Note that by Lemma \ref{C0 estimate}, there exists a $C'>0$ such that $\frac{1}{C'}<h<C'$, which implies $\frac{1}{C'}B\subset K \subset C'B$. By the definition of support function,  
			\[
			\nabla h(v)+h(v)v=\nu^{-1}_K(v)\in \partial K
			\]	
			for each $v\in \sn$. Thus the lower and upper bounds on $|\nabla h|^2+h^2$ comes from the bounds on $K$. 
			
			(2) To get the $C^2$ estimate, we divide it into two claims.\\
			\textbf{Claim 1}: The trace of the matrix $(\nabla^2h+hI)$ is bounded from above. \\
			\textbf{Claim 2}: The determinant of the matrix $(\nabla^2h+hI)$ is bounded from above and below by a positive constant. 
			
			By combining \textbf{Claim 1} and \textbf{Claim 2} together, we can immediately conclude that all eigenvalues of $(\nabla^2h+hI)$ have positive upper and lower bounds, which means the matrix $(\nabla^2h+hI)$ is positive definite. 
			
			Firstly, we can easily prove \textbf{Claim 1} by using the equation \eqref{lp monge ampere equation}. Indeed, 
			\[
			\det(\nabla^2h+hI)=(\sqrt{2\pi})^nh^{p-1}fe^{\frac{|\nabla h|^2+h^2}{2}}, 
			\] 
			where the right side has positive upper and lower bounds based on the bounds of $f$,  $h$ and (1). 
			
			To prove \textbf{Claim 2},  set
			\begin{equation}\label{commutator identity}
				H=tr(\nabla^2h+hI)=\Delta h+(n-1)h. 
			\end{equation}
			Due to the continuity of $H$ on $\sn$,  there exists a $v_0\in \sn$ such that $H(v_0)=\max_{v\in \sn}H$.  Then $\nabla H(v_0)=0$ and the matrix $\nabla^2H(v_0)$ is negative semi-definite. Choose a local orthonormal frame $e_1, \cdots, e_{n-1}$  such that $(h_{ij})$ is diagonal. Since the commutator identity
			\[
			H_{ii}=\Delta\omega_{ii}-(n-1)\omega_{ii}+H, 
			\] 
			where $\omega^{ij}$ denote the inverse of the matrix $\omega_{ij}=((h_{ij})+h\delta_{ij})$, the facts that $(\omega^{ij})$ is positive definite, $\nabla^2H$ is negative semi-definite,  and $(\omega^{ij})$ is diagonal at $v_0$ imply
			\begin{equation}\label{main inequality}
				0\geq \omega^{ii}H_{ii}=\omega^{ii}\Delta\omega_{ii}+H\sum_i\omega^{ii}-(n-1)^2\geq\omega^{ii}\Delta\omega_{ii}-(n-1)^2. 
			\end{equation}
			Taking the logarithm of equation \eqref{lp monge ampere equation}, then
			\[
			\log \det(\nabla^2h+hI)=\log f+\frac{|\nabla h|^2+h^2}{2}+\frac{n}{2}\log2\pi+(p-1)\log h. 
			\]
			Taking the spherical Laplacian of the above equation, we derive
			\begin{align}\label{a}
				&\sum_{\alpha}(\omega^{ij})_{\alpha}(\omega_{ij})_{\alpha}+\omega^{ij}\Delta\omega_{ij}\notag \\
				=&\Delta \log f+\sum_{i, j}h^2_{ij}+\sum_ih_i\Delta h_i+(1-\frac{p-1}{h^2})|\nabla h|^2+(h+\frac{p-1}{h})\Delta h. 
			\end{align}
			By the definition of $H$ and third
			covariant derivative of $h$ in \cite{Cheng-Yau}, we have
			\begin{align}\label{taking spherical laplacian}
			  \sum_ih_i\Delta h_i&=\sum_ih_i\sum_jh_{ijj}\notag\\
			  &=\sum_ih_i[\sum_j(h_{jji}-h_j\delta_{ij}+h_i)]\notag\\
			  &=\sum_ih_i[\sum_jh_{jji}+(n-1)h_i-h_i]\notag\\
			  &=\nabla h\cdot \nabla H-|\nabla h|^2
			\end{align}
		and 
		\begin{equation}\label{hij^2}
			\sum_{i,j}h^2_{ij}=H^2-2hH+(n-1)h^2.
		\end{equation}
			We also note that
			\begin{equation}\label{h laplace h}
				(h+\frac{p-1}{h})\Delta h=(h+\frac{p-1}{h})H-(n-1)h^2-(p-1)(n-1). 
			\end{equation}
	        Then, by using the fact that $(\omega^{ij})$ is the inverse matrix of $(\omega_{ij})$, we derive
			\[
			(\omega^{ij})_\alpha(\omega_{jk})\alpha=-\omega^{im}(\omega_{ml})_{\alpha}\omega^{lj}(\omega_{jk})_\alpha, 
			\]
			which implies its trace is non-positive, i.e.,
			\begin{equation}\label{b}
				(\omega^{ij})_\alpha(\omega_{ij})_\alpha\leq0. 
			\end{equation}
			Combining \eqref{a}, \eqref{taking spherical laplacian}, \eqref{hij^2}, \eqref{h laplace h} and \eqref{b}, we conclude
			\[
			\omega^{ij}\Delta\omega_{ij}\geq\Delta \log f+H^2+(\frac{p-1}{h}-h)H+\nabla h\cdot\nabla H-\frac{p-1}{h^2}|\nabla h|^2-(p-1)(n-1). 
			\]
			When evaluated at $v_0$ where $H$ reaches its maximum,  we have 
			\begin{equation}\label{wii inequality}
				\omega^{ii}\Delta\omega_{ii}\geq H^2+(\frac{p-1}{h}-h)H+[\Delta \log f-\frac{p-1}{h^2}|\nabla h|^2-(p-1)(n-1)]. 
			\end{equation}
			Then \eqref{main inequality} and \eqref{wii inequality} imply
			\[
			0\geq H^2+(\frac{p-1}{h}-h)H+[\Delta\log f-\frac{p-1}{h^2}|\nabla h|^2-(p+n-2)(n-1)]. 
			\]
			Note that the right side is a quadratic polynomial to $H$. Then by the bounds on $f$,  $|f|_{C^{2, \alpha}}$,  $h$ and $|\nabla h|^2$,  the coefficients have bounds which only depend on $C$  and we can deduce that $H$ is bounded from above by a positive constant $C'$ which only depends on $C$. 
			
			(3) By using the standard Evans-Krylov-Safonov theory, higher order estimates $|h|_{C^{4, \alpha}}$ can be derived, where we use statement (2) to ensure that the equation \eqref{lp monge ampere equation} is uniformly elliptic. 	
		\end{proof}
	\end{lemm}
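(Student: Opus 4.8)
The plan is to establish the three estimates in sequence, using the $C^0$ bounds from Lemma~\ref{C0 estimate} as the foundation. For part~(1), I would start from the bound $\frac{1}{C'}<h<C'$ already supplied by Lemma~\ref{C0 estimate}, which is equivalent to the inclusion $\frac{1}{C'}B\subset K\subset C'B$. Since the support function recovers the boundary point via $\nabla h(v)+h(v)v=\nu^{-1}_K(v)\in\partial K$, the quantity $|\nabla h|^2+h^2$ is exactly the squared Euclidean distance from the origin to this point, so the two-sided bound follows directly from the inner and outer radii of $K$.

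For part~(2), the determinant bound is immediate: rearranging the equation as $\det(\nabla^2h+hI)=(\sqrt{2\pi})^n h^{p-1}fe^{(|\nabla h|^2+h^2)/2}$, the right-hand side is bounded above and below by positive constants using the hypotheses on $f$ together with part~(1) and the $C^0$ estimate. The harder half is a uniform upper bound on the trace $H=\Delta h+(n-1)h$, which I would obtain by the maximum principle. At a maximizer $v_0$ of $H$ one has $\nabla H(v_0)=0$ and $\nabla^2 H(v_0)\le 0$; taking the logarithm of the equation, then applying the spherical Laplacian and the commutator identities for third covariant derivatives on $\sn$, should yield an inequality that at $v_0$ collapses to a quadratic $0\ge H^2+bH+c$ whose coefficients $b,c$ are controlled by $C$, $|f|_{C^{2,\alpha}}$, and part~(1). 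This forces $H\le C'$. Because $\nabla^2h+hI$ is positive definite (being the curvature matrix of a convex body) with bounded trace, each eigenvalue is bounded above, and combined with the lower bound on the determinant, each eigenvalue is also bounded below, giving $\frac{1}{C'}I<\nabla^2h+hI<C'I$.

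For part~(3), I would use that the estimate in~(2) renders the fully nonlinear equation uniformly elliptic. Writing it in the form $\log\det(\nabla^2h+hI)=\psi(v,h,\nabla h)$ with $\psi$ smooth in $(h,\nabla h)$ and $C^{2,\alpha}$ in $v$ through $f$, the Evans--Krylov theorem provides a $C^{2,\alpha}$ bound on $\nabla^2 h$, after which Schauder theory bootstraps to the desired $C^{4,\alpha}$ bound on $h$.

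The step I expect to be the main obstacle is the trace upper bound in part~(2). The delicate point is the bookkeeping of the lower-order terms generated by differentiating the Gaussian weight $e^{-(|\nabla h|^2+h^2)/2}$ and the factor $h^{1-p}$: one must verify that after the commutator identities are applied the only top-order contribution is $+H^2$, and that the remaining terms---in particular $\frac{p-1}{h^2}|\nabla h|^2$ and the $(p-1)$-dependent constants---stay bounded. It is precisely here that the $C^0$ lower bound on $h$ from Lemma~\ref{C0 estimate} is indispensable, since those terms blow up as $h\to 0$.
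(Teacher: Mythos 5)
Your proposal follows essentially the same route as the paper: part (1) from the $C^0$ bound and the identity $\nabla h+hv=\nu_K^{-1}(v)$, part (2) by combining the determinant identity read off from the equation with a maximum-principle argument for $H=\Delta h+(n-1)h$ at its maximizer (logarithm, spherical Laplacian, commutator identities, quadratic inequality in $H$), and part (3) by Evans--Krylov plus Schauder bootstrapping. The delicate bookkeeping you flag—the terms $\frac{p-1}{h^2}|\nabla h|^2$ and the $(p-1)$-dependent constants controlled by the positive lower bound on $h$—is exactly what the paper's computation carries out, so the plan is correct and matches the paper's argument.
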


	\section{Existence of small solutions}
	Before proving the existence of solutions for the $L_p$-Gaussian Minkowski problem with Guassian volume less than $\frac{1}{2}$, we still require some lemmas. 
	The first lemma is from Feng-Hu-Xu\cite{MR4564937}, which provides the $L_p$-Gaussian isoperimetric inequality. 
	\begin{lemm}\label{Lp Gaussian isoperimetric inequality}\textbf{($L_p$-Gaussian isoperimetric inequality)}
		Let $K\in \mathcal{K}^n_o$.  Then for $p\geq 1$, 
		\[
		|S_{p, \gamma_n, K}|\geq n\gamma_n(K)(\frac{\varphi(\Gamma^{-1}(\gamma_n(K)))}{n\gamma_n(K)})^p, 
		\]
		where $\varphi(t)=\frac{e^{-\frac{t^2}{2}}}{\sqrt{2\pi}}$,  and $\Gamma(x)=\frac{1}{\sqrt{2\pi}}\int_{-\infty}^{x}e^{-\frac{t^2}{2}}dt$. Especially,  if $K\in\mathcal{K}^n_o$ such that $\gamma_n(K)=\frac{1}{2}$, then for $p\geq1$,  we have
		\[
		|S_{p, \gamma_n, K}|\geq (\frac{1}{\sqrt{2\pi}})^p(\frac{n}{2})^{1-p}. 
		\]
	\end{lemm}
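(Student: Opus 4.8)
The plan is to reduce the inequality to two independent ingredients---the classical Gaussian isoperimetric inequality and a divergence-theorem identity---and then glue them together by Hölder's inequality. Throughout, write $d\sigma_K=\frac{1}{(\sqrt{2\pi})^n}e^{-\frac{|x|^2}{2}}\,d\mathcal{H}^{n-1}(x)$ for the Gaussian boundary measure on $\partial K$, so that by the definitions of the two surface measures one has $|S_{\gamma_n,K}|=\int_{\partial K}d\sigma_K$ and $|S_{p,\gamma_n,K}|=\int_{\partial K}(x\cdot\nu_K(x))^{1-p}\,d\sigma_K$. Since the origin lies in the interior of $K$, for $x\in\partial K$ we have $x\cdot\nu_K(x)=h_K(\nu_K(x))>0$, so all the integrands below are positive.

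For the first ingredient I would invoke the classical Gaussian isoperimetric inequality (Borell, Sudakov--Tsirelson): among sets of a prescribed Gaussian measure, a half-space minimizes the Gaussian perimeter. As $|S_{\gamma_n,K}|$ is exactly the Gaussian perimeter of $K$, and the half-space $\{x\in\rn:x_1\le a\}$ with $\gamma_n=\Gamma(a)$ has Gaussian perimeter $\varphi(a)$, this gives $|S_{\gamma_n,K}|\ge\varphi(\Gamma^{-1}(\gamma_n(K)))$. For the second ingredient I would bound $M:=\int_{\partial K}(x\cdot\nu_K(x))\,d\sigma_K$ from above by $n\gamma_n(K)$: applying the divergence theorem to the field $F(x)=x\,e^{-\frac{|x|^2}{2}}$, whose divergence is $(n-|x|^2)e^{-\frac{|x|^2}{2}}$, yields $M=\frac{1}{(\sqrt{2\pi})^n}\int_K(n-|x|^2)e^{-\frac{|x|^2}{2}}\,dx=n\gamma_n(K)-\frac{1}{(\sqrt{2\pi})^n}\int_K|x|^2 e^{-\frac{|x|^2}{2}}\,dx\le n\gamma_n(K)$, the last step because the subtracted integral is nonnegative.

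To glue these, I would apply Hölder's inequality with exponents $p$ and $p/(p-1)$ to the factorization $1=(x\cdot\nu_K)^{\frac{1-p}{p}}(x\cdot\nu_K)^{\frac{p-1}{p}}$ integrated against $\sigma_K$, which gives $|S_{\gamma_n,K}|\le|S_{p,\gamma_n,K}|^{1/p}M^{(p-1)/p}$, that is, $|S_{p,\gamma_n,K}|\ge|S_{\gamma_n,K}|^{p}M^{1-p}$ (the case $p=1$ is trivial, since then $(x\cdot\nu_K)^{1-p}\equiv1$ and both sides reduce to the Gaussian perimeter). Because $p\ge1$, the map $t\mapsto t^{p}$ is increasing while $t\mapsto t^{1-p}$ is nonincreasing; feeding in $|S_{\gamma_n,K}|\ge\varphi(\Gamma^{-1}(\gamma_n(K)))$ and $M\le n\gamma_n(K)$ then produces $|S_{p,\gamma_n,K}|\ge\varphi(\Gamma^{-1}(\gamma_n(K)))^{p}\,(n\gamma_n(K))^{1-p}$, which rearranges to the asserted inequality. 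The special case follows at once by substituting $\gamma_n(K)=\tfrac12$, using $\Gamma^{-1}(\tfrac12)=0$ and $\varphi(0)=\tfrac{1}{\sqrt{2\pi}}$.

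The analytic content is light once the factorization is spotted, so I expect the only genuine care to lie in two places. First, justifying the divergence theorem on a general $K\in\mathcal{K}^n_o$, where only Lipschitz boundary regularity is available; this is standard for convex bodies but should be stated. Second, and more delicate conceptually, is keeping the inequality directions aligned: since $1-p\le0$, the upper bound $M\le n\gamma_n(K)$ must be paired with the \emph{decreasing} function $t\mapsto t^{1-p}$ so that it correctly yields a \emph{lower} bound on $|S_{p,\gamma_n,K}|$, and likewise the lower bound on $|S_{\gamma_n,K}|$ is raised to the \emph{increasing} power $p$. This sign bookkeeping is exactly where the hypothesis $p\ge1$ is used.
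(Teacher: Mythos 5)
Your proof is correct. Note that the paper itself does not prove this lemma at all --- it is imported verbatim from Feng--Hu--Xu \cite{MR4564937} --- so there is no in-paper argument to compare against; your route (classical Gaussian isoperimetric inequality for $|S_{\gamma_n,K}|$, the divergence identity $\int_{\partial K}(x\cdot\nu_K)\,d\sigma_K=n\gamma_n(K)-\frac{1}{(\sqrt{2\pi})^n}\int_K|x|^2e^{-|x|^2/2}\,dx\le n\gamma_n(K)$, and H\"older with exponents $p$ and $p/(p-1)$ to interpolate) is essentially the standard proof given in that reference, where the interpolation step is phrased via Jensen's inequality for the convex function $t\mapsto t^{1-p}$ rather than H\"older, an equivalent computation. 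Your attention to the sign bookkeeping for $1-p\le 0$ and to the positivity of $x\cdot\nu_K=h_K(\nu_K)$ (which keeps $|S_{p,\gamma_n,K}|$ finite for $K\in\mathcal{K}^n_o$) covers the only delicate points.
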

	
	The next lemma concerns the uniqueness of smooth solutions to the isotropic Gaussian Minkowski problem, first obtained by Ivaki-Milman \cite{ivaki2023uniqueness}. This lemma constitutes a key element in employing degree theory to establish the existence of solutions. 
	\begin{lemm}\label{uniqueness of isotropic equation}
		For $p\geq1$, if there exists a convex body  $K\in\mathcal{K}^n_e$ satisfies that its support function $h$ is the solution of the following equation
		\begin{equation}\label{isotropic equation}
			h^{1-p}e^{-\frac{|\nabla h|^2+h^2}{2}}\det(\nabla^2h+hI)=C, 
		\end{equation} 
		where $C>0$ is a constant, then $h$ has to be a constant solution. 
	\end{lemm}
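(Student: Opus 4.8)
The plan is to prove the rigidity statement that the only o-symmetric solution is a centered ball, i.e.\ that $h$ must be constant. Writing \eqref{isotropic equation} in logarithmic form,
\[
(1-p)\log h-\frac{1}{2}\big(|\nabla h|^2+h^2\big)+\log\det(h_{ij}+h\delta_{ij})=\log C,
\]
makes transparent that the hypothesis is equivalent to $S_{p,\gamma_n,K}$ being a constant multiple of spherical Lebesgue measure, so the candidate solution is a ball and everything reduces to uniqueness. First I would extract the pointwise information at the extrema of $h$.

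At a maximum point $v_+$ of $h$ one has $\nabla h(v_+)=0$ and $(h_{ij})(v_+)\le 0$, so $\det(h_{ij}+h\delta_{ij})(v_+)\le h_{\max}^{\,n-1}$, and the equation forces $C\le g(h_{\max})$ with $g(t):=t^{n-p}e^{-t^2/2}$. Symmetrically, at a minimum point $\nabla h=0$ and $(h_{ij})\ge 0$ give $\det(h_{ij}+h\delta_{ij})\ge h_{\min}^{\,n-1}$, hence $C\ge g(h_{\min})$. Thus every solution obeys
\[
g(h_{\min})\le C\le g(h_{\max}).
\]
For $p\ge n$ the function $g$ is strictly decreasing on $(0,\infty)$, so these two inequalities collapse to $g(h_{\min})=g(h_{\max})$ and immediately yield $h_{\min}=h_{\max}$; the whole difficulty is therefore concentrated in the range $1\le p<n$ that is relevant here.

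\emph{The main obstacle} is precisely this non-monotonicity: when $n-p>0$, $g$ increases on $(0,\sqrt{n-p})$ and decreases on $(\sqrt{n-p},\infty)$, so $g(h_{\min})\le C\le g(h_{\max})$ does not force $h_{\min}=h_{\max}$ unless one already knows both extremal values lie on the decreasing branch. To close the gap I would upgrade the pointwise extremum test to a global argument. One route is a second auxiliary-function maximum principle: apply the maximum principle to a quantity such as $z=\frac{1}{2}(h^2+|\nabla h|^2)=\frac{1}{2}|\nabla h+hv|^2$ (for which a direct computation gives $\nabla_i z=\omega_{ij}h_j$, with $\omega_{ij}=h_{ij}+h\delta_{ij}$, so that critical points of $z$ already force $\nabla h=0$), or to the trace $H=\Delta h+(n-1)h$ via the commutator identity $H_{ii}=\Delta\omega_{ii}-(n-1)\omega_{ii}+H$ together with the differentiated log-equation --- exactly the machinery already set up in the $C^2$ estimate of Lemma \ref{a-priori estimate} --- so as to force $(\omega_{ij})=\lambda\,\delta_{ij}$ everywhere. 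Umbilicity then identifies $K$ as a sphere, and o-symmetry centers it at the origin.

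An alternative, and probably cleaner, route is a global integral identity: multiply the logarithmic equation by a suitable test function and integrate over $\sn$, exploiting the divergence-free property of the cofactor matrix of $(\omega_{ij})$ together with a matrix mean (Newton--Maclaurin) inequality, to obtain an integral inequality whose equality case is exactly the centered ball. I expect this rigidity --- upgrading $g(h_{\min})\le C\le g(h_{\max})$ to $h_{\min}=h_{\max}$ in the non-monotone regime $1\le p<n$ --- to be the genuinely hard step; the pointwise estimates are routine, and the sharp equality analysis is the essential content of the Ivaki--Milman theorem invoked here.
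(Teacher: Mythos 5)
The paper's entire proof is a one-line reduction: rewrite \eqref{isotropic equation} as $\det(\nabla^2h+hI)=\varphi$ with $\varphi=ch^{p-1}e^{\frac{h^2+|\nabla h|^2}{2}}$ and invoke Theorem 1.3 of Ivaki--Milman, whose monotonicity hypotheses (non-decreasing dependence on $h$ and on $|\nabla h+hv|$) hold exactly because $p\geq 1$. Your proposal ultimately lands in the same place --- you concede at the end that the rigidity in the regime $1\leq p<n$ \emph{is} the content of the Ivaki--Milman theorem --- so as a citation-based argument it is compatible with the paper. What is missing, however, is the only substantive step in such a citation: verifying that the equation fits the form treated by Ivaki--Milman and that their hypotheses are satisfied, which is precisely where the assumption $p\geq1$ enters (via the monotonicity of $h\mapsto h^{p-1}$ and $r\mapsto e^{r^2/2}$). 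Without that check the lemma is not actually derived.

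Regarding the material you add: the extremum analysis giving $g(h_{\min})\leq C\leq g(h_{\max})$ with $g(t)=t^{n-p}e^{-t^2/2}$ is correct, but it only closes the case $p\geq n$, which is outside the range this paper needs ($1\leq p<n$). The two routes you sketch for the hard case are not carried out and, as described, would not obviously succeed: a maximum-principle argument on $z=\frac12(h^2+|\nabla h|^2)$ or on $H=\Delta h+(n-1)h$ does not by itself force umbilicity (the computation in Lemma \ref{a-priori estimate} only yields an upper bound on $H$, not pointwise isotropy of $(\omega_{ij})$), and the actual mechanism behind Ivaki--Milman's theorem is not a divergence-structure/Newton--Maclaurin integral identity but the spectral gap of the Hilbert--Brunn--Minkowski operator (the local form of the Brunn--Minkowski inequality), for which origin-symmetry of $K$ is used essentially. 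So if the proposal is read as an attempt at a self-contained proof, there is a genuine gap at the rigidity step; if it is read as an appeal to the cited theorem, it still needs the explicit substitution and hypothesis check that constitute the paper's proof.
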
 
	\begin{proof}
		We apply the Theorem 1.3 in \cite{ivaki2023uniqueness}, and set $\varphi=ch^{p-1}e^{\frac{h^2+|\nabla h|^2}{2}}$. Then for $p\geq1$,  if $h$ is the solution of equation \eqref{isotropic equation},  it must be a constant.    
	\end{proof}
	
	By utilizing Lemma \ref{uniqueness of isotropic equation}, we can derive the following conclusion through straightforward calculations.
	
	\begin{lemm}\label{number of solution}
		For $1\leq p<n$,  simplify equation \eqref{isotropic equation} to the following 
		\begin{equation}\label{simplify}
			h^{1-p}e^{-\frac{|\nabla h|^2+h^2}{2}}\det(\nabla^2h+hI)=C, 
		\end{equation}
		and we have\\
		(a) if $C\in(0, e^{-\frac{n-p}{2}}(n-p)^{\frac{n-p}{2}})$,  there are precisely two constant solutions to \eqref{simplify};\\
		(b) if $C=e^{-\frac{n-p}{2}}(n-p)^{\frac{n-p}{2}}$,  there is a unique constant solution to \eqref{simplify};\\
		(c) if $C>e^{-\frac{n-p}{2}}(n-p)^{\frac{n-p}{2}}$,  there is no constant solution to \eqref{simplify}. 
	\end{lemm}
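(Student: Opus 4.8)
The plan is to reduce the problem to counting positive constants that solve a single scalar equation, after which the argument is elementary one-variable calculus. First I would invoke Lemma \ref{uniqueness of isotropic equation}: since $p\ge 1$, every o-symmetric convex body whose support function solves \eqref{simplify} must have a constant support function. Consequently it suffices to count the positive constants $h\equiv r$ satisfying the equation. For such a constant, $\nabla h=0$ and $\nabla^2 h=0$, so on the $(n-1)$-dimensional sphere the matrix $\nabla^2 h+hI$ equals $rI$ (an $(n-1)\times(n-1)$ scalar matrix), whence $\det(\nabla^2 h+hI)=r^{n-1}$. Substituting this together with $h^{1-p}=r^{1-p}$ into \eqref{simplify} collapses the whole equation to the scalar identity
\[
r^{\,n-p}e^{-\frac{r^2}{2}}=C.
\]

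Next I would study the auxiliary function $g(r)=r^{\,n-p}e^{-r^2/2}$ on $(0,\infty)$. Because $1\le p<n$ forces $n-p>0$, we have $g(r)\to 0$ both as $r\to 0^+$ and as $r\to\infty$. Differentiating,
\[
g'(r)=r^{\,n-p-1}e^{-r^2/2}\bigl((n-p)-r^2\bigr),
\]
so $g$ has a single critical point at $r_\ast=\sqrt{n-p}$, where it attains its global maximum
\[
g(r_\ast)=(n-p)^{\frac{n-p}{2}}e^{-\frac{n-p}{2}}.
\]
Thus $g$ is strictly increasing on $(0,r_\ast)$ and strictly decreasing on $(r_\ast,\infty)$.

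Finally I would read off the solution count by comparing $C$ with $g(r_\ast)$. By the strict monotonicity on each side of $r_\ast$ together with the two boundary limits, and with the intermediate value theorem supplying existence on each monotone branch, the horizontal level $y=C$ meets the graph of $g$ in exactly two points when $0<C<g(r_\ast)$, in exactly one point when $C=g(r_\ast)$, and in no point when $C>g(r_\ast)$. This yields precisely cases (a), (b), (c) with the stated threshold $e^{-\frac{n-p}{2}}(n-p)^{\frac{n-p}{2}}$. The only genuine input is Lemma \ref{uniqueness of isotropic equation}, which I expect to be the crux: once all solutions are known to be constant the remainder is routine, and the sole computation requiring care is $\det(\nabla^2 h+hI)=r^{n-1}$, which is exactly what produces the exponent $n-p$ after combining with the factor $h^{1-p}$.
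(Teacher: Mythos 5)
Your proof is correct and matches what the paper intends: the paper gives no explicit proof of this lemma, describing it as a straightforward calculation following Lemma \ref{uniqueness of isotropic equation}, and your reduction to counting roots of $r^{n-p}e^{-r^2/2}=C$ via the one-variable analysis of $g(r)=r^{n-p}e^{-r^2/2}$ is precisely that calculation. Note only that since the statement counts \emph{constant} solutions, the appeal to Lemma \ref{uniqueness of isotropic equation} is not strictly needed here (it is used later in the degree-theory argument); the rest of your argument is complete.
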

	
	Now we are ready to prove the existence result by using the degree theory. 
	
	\begin{theo}\label{Existence of smooth,  small solution b}
		 For $1\leq p<n$, let $\alpha\in(0, 1)$,   $f\in C^{2, \alpha}_+(\sn)$ is a positive even function and satisfies $\|f\|_{L_1}<(\frac{1}{\sqrt{2\pi}})^p(\frac{n}{2})^{1-p}$. Then there exists a $C^{4, \alpha}$ convex body $K\in \mathcal{K}^n_e$ with $\gamma_n(K)<\frac{1}{2}$ satisfies
		\begin{equation}\label{monge ampere equation for lp}
			\frac{1}{(\sqrt{2\pi})^n}e^{-\frac{|\nabla h|^2+h^2}{2}}h^{1-p}\det(h_{ij}+hI)=f. 
		\end{equation}
	\end{theo}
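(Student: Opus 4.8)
The plan is to solve \eqref{monge ampere equation for lp} by degree theory, deforming $f$ to a constant right-hand side whose solutions are completely understood. Write
\[
\mathcal{F}(h)=\frac{1}{(\sqrt{2\pi})^n}e^{-\frac{|\nabla h|^2+h^2}{2}}h^{1-p}\det(\nabla^2h+hI),
\]
regarded as a second-order fully nonlinear operator acting on even support functions, and consider the family $\mathcal{F}(h)=f_t$ with $f_t=(1-t)c_0+tf$ for $t\in[0,1]$, where $c_0>0$ is a constant to be fixed. Since $f_t$ is a convex combination of two positive even $C^{2,\alpha}$ functions, it is uniformly bounded above and below and has uniformly bounded $C^{2,\alpha}$-norm, so Lemma \ref{C0 estimate} and Lemma \ref{a-priori estimate} supply a single constant $C'$ independent of $t$ such that every solution $h$ of $\mathcal{F}(h)=f_t$ satisfies $\frac{1}{C'}<h<C'$, $\frac{1}{C'}I<\nabla^2h+hI<C'I$ and $|h|_{C^{4,\alpha}}<C'$, all strictly. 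I would then work in the open bounded set
\[
\Omega=\Big\{h\in C^{4,\alpha}_e(\sn):\tfrac{1}{C'}<h<C',\ \tfrac{1}{C'}I<\nabla^2h+hI<C'I,\ |h|_{C^{4,\alpha}}<C',\ \gamma_n(K_h)<\tfrac12\Big\},
\]
which is admissible since these are open conditions and $\gamma_n(K_h)$ depends continuously on $h$. Because $\mathcal{F}$ is uniformly elliptic on $\overline\Omega$ (by the $C^2$ bound), the degree of $\mathcal{F}(\cdot)-f_t$ on $\Omega$, in the sense of the degree theory for fully nonlinear elliptic operators on the sphere, is defined once we verify non-vanishing on $\partial\Omega$.

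The crux of homotopy invariance is that $\mathcal{F}(h)-f_t$ has no zero on $\partial\Omega$ for any $t$. The a priori estimates rule out zeros on the part of $\partial\Omega$ where one of the $C'$-bounds is attained, since genuine solutions obey these bounds strictly. The remaining boundary piece is $\{\gamma_n(K_h)=\frac12\}$, and here the sharp $L^1$ hypothesis enters. I choose $c_0$ so small that $c_0|\sn|<(\tfrac{1}{\sqrt{2\pi}})^p(\tfrac{n}{2})^{1-p}$, whence $\|f_t\|_{L^1}=(1-t)c_0|\sn|+t\|f\|_{L^1}<(\tfrac{1}{\sqrt{2\pi}})^p(\tfrac{n}{2})^{1-p}$ for every $t$. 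If some $h\in\partial\Omega$ with $\gamma_n(K_h)=\frac12$ were a solution, then $|S_{p,\gamma_n,K_h}|=\|f_t\|_{L^1}$ would be strictly below the threshold, contradicting the equality case of Lemma \ref{Lp Gaussian isoperimetric inequality}. Hence the degree is independent of $t$, and it remains to evaluate it at $t=0$.

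At $t=0$ the equation is the isotropic equation \eqref{isotropic equation} with right-hand side $c_0$. By Lemma \ref{uniqueness of isotropic equation} every even solution is constant, and by Lemma \ref{number of solution}, choosing $c_0$ additionally so that $(\sqrt{2\pi})^nc_0<(n-p)^{\frac{n-p}{2}}e^{-\frac{n-p}{2}}$ yields exactly two constant solutions, the roots $r_1<\sqrt{n-p}<r_2$ of $\frac{1}{(\sqrt{2\pi})^n}r^{n-p}e^{-r^2/2}=c_0$. Taking $c_0$ small forces $r_1\to0$, so $r_1B$ has $\gamma_n<\frac12$ while $r_2B$ has $\gamma_n>\frac12$; thus $h\equiv r_1$ is the unique solution lying in $\Omega$. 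Linearizing $\mathcal{F}$ at the constant $r_1$ (using that the linearization of $\det(\nabla^2h+hI)$ at $h\equiv r_1$ is $r_1^{n-2}(\Delta\phi+(n-1)\phi)$) gives
\[
L_{r_1}\phi=\frac{c_0}{r_1}\big(\Delta\phi+(n-p-r_1^2)\phi\big).
\]
Since $r_1^2<n-p$ and $p\geq1$, the number $n-p-r_1^2$ lies strictly in $(0,2n)$, hence it equals none of the eigenvalues $k(k+n-2)$ of $-\Delta$ on $\sn$ associated with even spherical harmonics (namely $0,2n,\dots$). Therefore $L_{r_1}$ is invertible on even functions and the zero $h\equiv r_1$ is non-degenerate, so the degree at $t=0$ equals $\pm1\neq0$. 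By homotopy invariance $\deg(\mathcal{F}(\cdot)-f,\Omega,0)\neq0$, which produces a solution $h\in\Omega$ of \eqref{monge ampere equation for lp} with $\gamma_n(K_h)<\frac12$ and $C^{4,\alpha}$ regularity built into $\Omega$.

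I expect the main obstacle to be the degree-theoretic bookkeeping rather than any single estimate: one must confirm that the fully nonlinear degree is well defined and homotopy invariant on $\Omega$ (so that the framework genuinely applies, with uniform ellipticity guaranteed by Lemma \ref{a-priori estimate}), and, most delicately, that the only boundary through which solutions could escape is $\{\gamma_n=\frac12\}$, which is sealed off precisely by pairing the strict $L^1$ bound with the equality case of the $L_p$-Gaussian isoperimetric inequality. The linearization and its spectral non-degeneracy are the concrete technical core, but they ultimately rest on the elementary fact that the small root satisfies $r_1<\sqrt{n-p}$.
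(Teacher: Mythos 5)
Your proposal is correct and follows essentially the same route as the paper: the same homotopy $f_t=(1-t)c_0+tf$, the same open set cut out by the a priori estimates of Lemmas \ref{C0 estimate} and \ref{a-priori estimate} together with $\gamma_n<\frac12$, the same exclusion of boundary zeros via the $L_p$-Gaussian isoperimetric inequality, and the same degree computation at $t=0$ using the Ivaki--Milman uniqueness and the linearization $\Delta\phi+(n-p-r^2)\phi$. The only (welcome) refinement is that you verify invertibility of the linearized operator explicitly by locating $n-p-r_1^2$ strictly between the even eigenvalues $0$ and $2n$ of $-\Delta$, whereas the paper simply chooses $c_0$ to avoid the discrete spectrum.
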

	
	\begin{proof}
		We prove the existence of solutions by using the degree theory for second-order nonlinear elliptic operators developed in Li\cite{MR1026774}. By employing Lemma \ref{number of solution}, it follows that there exists a unique constant solution $h\equiv r_0>0$ with $\gamma_n([h])<\frac{1}{2}$ if $f\equiv c_0>0$ is small enough.  We also require that $c_0>0$ is small enough so that $|c_0|_{L_1}<(\frac{1}{\sqrt{2\pi}})^p(\frac{n}{2})^{1-p}$.  Our final requirement for $c_0>0$ is that the operator $L\phi=\Delta_{\sn}\phi+((n-p)-r_0^2)\phi$ is invertible,  which is possible since the spherical Laplacian has a discrete spectrum. 
		
		Define $F(\cdot; t):C^{4, \alpha}(\sn)\rightarrow C^{2, \alpha}(\sn)$ as follows
		\[
		F(h; t)=\det(\nabla^2h+hI)-(\sqrt{2\pi})^ne^{\frac{|\nabla h|^2+h^2}{2}}h^{p-1}f_t, 
		\] 
		where 
		\[
		f_t=(1-t)c_0+tf. 
		\]
		Since $f\in C^{2, \alpha}(\sn)$ and  $c_0>0$,  there exists $C>0$ such that $\frac{1}{C}<f, c_0<C$ and $|f|_{C^{2, \alpha}}<C$. Note that for each $t\in[0, 1]$, the function $f_t$ has the same bound as $f$, that is
		\[
		\frac{1}{C}<f_t<C,\  |f_t|_{L_1}<(\frac{1}{\sqrt{2\pi}})^p(\frac{n}{2})^{1-p},\  |f_t|_{C^{2, \alpha}}<C. 
		\]
		Define $O\subset C^{4, \alpha}(\sn)$ by
		\[
		O=\{h\in C^{4, \alpha}(\sn):\ \frac{1}{C'}<h<C',\  \frac{1}{C'}I<(\nabla^2h+hI)<C'I,\  |h|_{C^{4, \alpha}}<C',\  \gamma_n(h)<\frac{1}{2}\}, 
		\]
		where $\gamma_n(h)=\gamma_n([h])$ is well-defined,  and $C'>0$ is the constant extracted from Lemma \ref{a-priori estimate}. We note that $O$ is an open bounded set under the norm $|\cdot|_{C^{4, \alpha}}$. 
		
    	Since every $h\in O$ has uniform upper and lower bounds for the eigenvalues of its Hessian,  the operator $F(\cdot, t)$ is uniformly elliptic on $O$ for any $t\in [0, 1]$. We claim that for each $r\in [0, 1]$,  if $h\in\partial O$,  then
		\[
		F(h; t)\neq0. 
		\]
		If not, i.e., $ F(h; t)=0$,  then $h$ solves
		\begin{equation}\label{t Guassian equation}
			\frac{1}{(\sqrt{2\pi})^n}e^{-\frac{|\nabla h|^2+h^2}{2}}h^{1-p}\det(\nabla^2h+hI)=f_t. 
		\end{equation} 
		Since $h\in\partial O$,  we also have that $\gamma_n(h)\leq\frac{1}{2}$.  If $\gamma_n(h)<\frac{1}{2}$,  then by Lemma \ref{C0 estimate} and Lemma  \ref{a-priori estimate},  we have
		\[
		\frac{1}{C'}<h<C',\  \frac{1}{C'}I<(\nabla^2h+hI)<C'I,\  |h|_{C^{4, \alpha}}<C'. 
		\]
		This is a contradiction to the openness of $O$.  Thus,  for any $h\in\partial O$,  it should satisfy
		\[
		\gamma_n(h)=\frac{1}{2}. 
		\]
		Then by Lemma \ref{Lp Gaussian isoperimetric inequality},  there holds
		\[
		|S_{p,\gamma_n, [h]}|\geq(\frac{1}{\sqrt{2\pi}})^p(\frac{n}{2})^{1-p}, 
		\]
		but this contradicts the fact that $h$ solves equation \eqref{t Guassian equation} and the condition $|f_t|_{L_1}<(\frac{1}{\sqrt{2\pi}})^p(\frac{n}{2})^{1-p}$. Therefore, we prove the claim. 
		
		Thus we use Proposition 2.2 in Li\cite{MR1026774} and conclude that
		\[
		\deg(F(\cdot; 0), O, 0)=\deg(F(\cdot;1), O, 0). 
		\]
		
		Next we compute $\deg(F(\cdot;0), O, 0)$.  For simplicity,  we will write $F(\cdot)=F(\cdot;0)$. 
		Recall that $h\equiv r_0$ is the unique solution in $O$ to equation \eqref{monge ampere equation for lp} when $f\equiv c_0$.  Denote by $L_{r_0}:C^{4, \alpha}(\sn)\rightarrow C^{2, \alpha}(\sn)$ the linearized operator of $F$ at the constant function $r_0$.  It is straightforward to compute that 
		\begin{align*}
			L_{r_0}(\phi)
			&=r_0^{n-2}\Delta_{\sn}\phi+((n-p)r_0^{n-2}-r_0^n)\phi    \\	&=r_0^{n-2}(\Delta_{\sn}\phi+((n-p)-r_0^2)\phi), 
		\end{align*}
		which we have specifically chosen a $c_0>0$ such that $L_{r_0}$ is invertible.  By Proposition 2.3 in Li\cite{MR1026774} and the fact that $h\equiv r_0$ is the unique solution for $F(h)=0$ in $O$,  we have
		\[
		\deg(F, O, 0)=\deg(L_{r_0}, O, 0)\neq0, 
		\] 
		where the last inequality follows from the Proposition 2.4 in Li\cite{MR1026774}.  Then we conclude that
		\[
		\deg(F(\cdot;1), O, 0)\neq0, 
		\]
		which implies the existence of $h\in O$ such that $F(h;1)=0$.

	\end{proof}
	
	We can immediately remove the regularity assumption on $f$ through a simple approximation. 
	
	\begin{theo}\label{approximation for small volume}
		Let $f\in L^1(\sn)$ be an even function such that $\|f\|_{L_1}<(\frac{1}{\sqrt{2\pi}})^p(\frac{n}{2})^{1-p}$.  If there exists $C>0$ such that $\frac{1}{C}<f<C$ on $\sn$,  then there exists an o-symmetric $K$ with $\gamma_n(K)<\frac{1}{2}$ such that
		\[
		dS_{p, \gamma_n, K}=fdv. 
		\]
	\end{theo}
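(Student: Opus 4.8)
The plan is to approximate the merely bounded density $f$ by smooth data, apply the smooth existence result Theorem~\ref{Existence of smooth,  small solution b} to each approximant, and then pass to the limit using the weak continuity of the $L_p$-Gaussian surface area measure. First I would fix a sequence of even functions $f_i\in C^{2,\alpha}_+(\sn)$ obtained by mollifying $f$ on the sphere. Mollification is an averaging operation, so it preserves evenness and the two-sided bound, giving $\frac{1}{C}\le f_i\le C$ for every $i$ with a single constant. Moreover $f_i\to f$ in $L^1(\sn)$, hence $\|f_i\|_{L_1}\to\|f\|_{L_1}<(\frac{1}{\sqrt{2\pi}})^p(\frac{n}{2})^{1-p}$, so after discarding finitely many indices we may assume each $f_i$ also satisfies the strict integral bound. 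For each such $f_i$, Theorem~\ref{Existence of smooth,  small solution b} furnishes an o-symmetric convex body $K_i$ with $\gamma_n(K_i)<\frac12$ whose support function $h_i$ solves \eqref{monge ampere equation for lp} with right-hand side $f_i$; equivalently $dS_{p,\gamma_n,K_i}=f_i\,dv$.

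Next I would extract uniform geometric bounds. Since $\frac{1}{C}\le f_i\le C$ holds with a constant independent of $i$, the $C^0$ estimate of Lemma~\ref{C0 estimate} applies with a single $\tau'$, giving $\frac{1}{\tau'}<h_i<\tau'$ and hence $\frac{1}{\tau'}B\subset K_i\subset\tau'B$ for all $i$. By the Blaschke selection theorem there is a subsequence (not relabeled) converging in the Hausdorff metric to a convex body $K_0$; since o-symmetry is preserved under Hausdorff limits and the inclusions pass to the limit, $K_0\in\mathcal{K}^n_e$ and in particular $K_0\in\mathcal{K}^n_o$. Theorem~\ref{weak convergent} then yields $S_{p,\gamma_n,K_i}\to S_{p,\gamma_n,K_0}$ weakly. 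On the other hand, for any $g\in C(\sn)$ the convergence $f_i\to f$ in $L^1$ gives $\int_{\sn}g f_i\,dv\to\int_{\sn}g f\,dv$, so the measures $f_i\,dv$ converge weakly to $f\,dv$. As weak limits of measures are unique, I would conclude $dS_{p,\gamma_n,K_0}=f\,dv$, which is the desired identity.

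It remains to confirm the strict Gaussian-volume bound $\gamma_n(K_0)<\frac12$, and this is the one place requiring a genuine argument rather than a soft limit. Because $\gamma_n$ is continuous with respect to Hausdorff convergence, $\gamma_n(K_0)=\lim_i\gamma_n(K_i)\le\frac12$, so only the boundary case $\gamma_n(K_0)=\frac12$ must be excluded. Here I would invoke the $L_p$-Gaussian isoperimetric inequality, Lemma~\ref{Lp Gaussian isoperimetric inequality}: if $\gamma_n(K_0)=\frac12$ then $|S_{p,\gamma_n,K_0}|\ge(\frac{1}{\sqrt{2\pi}})^p(\frac{n}{2})^{1-p}$. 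But $|S_{p,\gamma_n,K_0}|=\int_{\sn}f\,dv=\|f\|_{L_1}<(\frac{1}{\sqrt{2\pi}})^p(\frac{n}{2})^{1-p}$ by hypothesis, a contradiction. Hence $\gamma_n(K_0)<\frac12$, which finishes the proof. The main obstacle is precisely this exclusion of the equality case; the rest is standard compactness together with the weak continuity already furnished by Theorem~\ref{weak convergent}.
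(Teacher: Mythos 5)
Your proposal is correct and follows essentially the same route as the paper: mollify $f$ to even smooth densities with the same two-sided and $L^1$ bounds, apply Theorem~\ref{Existence of smooth,  small solution b} to each, use the uniform $C^0$ estimate of Lemma~\ref{C0 estimate} plus Blaschke selection to pass to a Hausdorff limit, identify the limit measure via Theorem~\ref{weak convergent}, and exclude $\gamma_n(K)=\tfrac12$ by the $L_p$-Gaussian isoperimetric inequality of Lemma~\ref{Lp Gaussian isoperimetric inequality}. Your write-up is in fact slightly more careful than the paper's (explicitly justifying the preservation of evenness and bounds under mollification and the uniqueness of weak limits), but there is no substantive difference.
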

	
	\begin{proof}
		Suppose that $d\mu_i=f_idv$ converges weakly to $fdv$, where $f_i\in C^{2,\alpha}_e(\sn)$ and $\frac{1}{C}<f_i<C$ on $\sn$. Since $\|f\|_{L_1}<(\frac{1}{\sqrt{2\pi}})^p(\frac{n}{2})^{1-p}$,  $\mu_i$ satisfies $|\mu_i|<(\frac{1}{\sqrt{2\pi}})^p(\frac{n}{2})^{1-p}$ for large enough $i$. Then by Theorem \ref{Existence of smooth,  small solution b}, for every $f_i$, there exists a $C^{4, \alpha}$ o-symmetric $K_i$ with $\gamma_n(K_i)<\frac{1}{2}$, its support function $h$ such that
		\[
		\frac{1}{(\sqrt{2\pi})^n}e^{-\frac{|\nabla h|^2+h^2}{2}}h^{1-p}\det(h_{ij}+hI)=f_i. 
		\] 
		By Lemma \ref{C0 estimate},  there exists $C'>0$ that only depends on $C$ such that
		\begin{equation}\label{C0 estimate of convex}
			\frac{1}{C'}B\subset K_i\subset C'B. 
		\end{equation}
		Then we apply Blaschke's selection theorem and select a subsequence of $K_i$ converges to an o-symmetric convex body $K\in\mathcal{K}^n_e$. By \eqref{C0 estimate of convex},  we have
		\[
		\frac{1}{C'}B\subset K\subset C'B. 
		\]
		Then Theorem \ref{weak convergent} implies
		\[
		dS_{p, \gamma_n, K}=fd\mu. 
		\]
		Moreover, by the continuity of $\gamma_n$,  we have $\gamma_n(K)\leq\frac{1}{2}$. We can further deduce that $\gamma_n(K)<\frac{1}{2}$. If not, then by the inequality  $\|f\|_{L_1}<(\frac{1}{\sqrt{2\pi}})^p(\frac{n}{2})^{1-p}$  and the $L_p$-Gaussian isoperimetric inequality in Lemma \ref{Lp Gaussian isoperimetric inequality}, we obtain a contradiction. This completes the proof.  
	    \end{proof}
        Finally, combining the conclusion in \cite{MR4564937} with the existence of solutions with $\gamma_n>\frac{1}{2}$, the result immediately becomes to Theorem \ref{approximation}.
        
        \begin{proof}[Proof of Theorem \ref{approximation}]
        
        For every $f_i\in C^{2,\alpha}_e(\sn)$ and $\frac{1}{C}<f_i<C$,  Theorem 5.2 in \cite{MR4564937} provided that there exists an even solution $h_i\in C^{4,\alpha}$ with $\gamma_n({[h_i]})>\frac{1}{2}$.
      Note that the approximation in Theorem \ref{approximation for small volume} is independent of the size of Guassian volume, thus we can apporximate it similarly. 
        \end{proof}
	
	\section*{Acknowledgement}
	Deeply thanks to my supervisor, professor Yong Huang, for his meticulous help and encouragement. I am also thanks the referees for detailed reading and helpful comments.

\end{document}